\def\h{ {\cal H} }
\def\a{ {\cal A} }
\def\l{ {\cal L} }
\def\n{ {\cal N} }
\def\b{ {\cal B} }
\def\u{ {\cal U} }
\def\m{ {\cal M} }
\def\p{ {\cal P} }
\def\k{ {\cal K} }
\def\f{ {\cal F} }
\def\c{ {\cal C} }
\newtheorem{teo}{Theorem}[section]
\newtheorem{prop}[teo]{Proposition}
\newtheorem{lem}[teo]{Lemma}
\newtheorem{coro}[teo]{Corollary}
\theoremstyle{definition}
\newtheorem{rem}[teo]{Remark}
\newtheorem{ejem}[teo]{Example}
\title{Uncertainty principle and geometry of the infinite Grassmann manifold}
\author{Esteban Andruchow and Gustavo Corach}
\begin{document}

\maketitle 

\begin{abstract}
We study the pairs of projections
$$
P_If=\chi_If  ,\  \ Q_Jf= \left(\chi_J \hat{f}\right)\check{\ } , \ \ f\in L^2(\mathbb{R}^n),
$$ 
where $I, J\subset \mathbb{R}^n$ are sets of finite Lebesgue measure, $\chi_I, \chi_J$ denote the corresponding characteristic functions and $\hat{\ } , \check{\ }$ denote the Fourier-Plancherel transformation $L^2(\mathbb{R}^n)\to L^2(\mathbb{R}^n)$ and its inverse. These pairs of projections have been widely   studied by several authors  in connection with the mathematical formulation of Heisenberg's uncertainty principle. Our study is done from a differential geometric point of view.  We apply known results on the Finsler geometry of the Grassmann manifold $\p(\h)$ of a Hilbert space $\h$ to establish that  there exists a unique minimal  geodesic of $\p(\h)$, which is a curve of the form
$$
\delta(t)=e^{itX_{I,J}}P_Ie^{-itX_{I,J}}
$$
which joins $P_I$ and $Q_J$ and has length $\pi/2$. As  a consequence we obtain that if $H$ is the logarithm of the Fourier-Plancherel map, then
$$
\|[H,P_I]\|\ge \pi/2.
$$
The spectrum of $X_{I,J}$  is denumerable and symmetric with respect to the origin, it has a smallest  positive eigenvalue $\gamma(X_{I,J})$ which satisfies
$$
\cos(\gamma(X_{I,J}))=\|P_IQ_J\|.
$$
\end{abstract}

\bigskip

{\bf 2010 MSC:}  58B20, 47B15, 42A38, 47A63.

{\bf Keywords:}  Projections, pairs of projections,  Grasmann manifold, uncertainty principle.

\section{Introduction}
Consider the following example:

\begin{ejem}\label{el ejemplo}
Let $I,J\subset \mathbb{R}^n$ be Lebesgue-measurable sets of finite measure. Let $P_I,Q_J$ be the projections in $L^2(\mathbb{R}^n,dx)$ given by
$$
P_If=\chi_If \ \ \hbox{ and } \ \ Q_Jf= \left(\chi_J \hat{f}\right)\check{\ },
$$
where $\chi_L$ denotes the characteristic function of the set $L$.
Equivalently, denoting by $U_\f$ the Fourier transformation regarded as a unitary operator acting in $L^2(\mathbb{R}^n,dx)$ and by $M_\varphi$ the multiplication by $\varphi$, then
$$
P_I=M_{\chi_I} \ \hbox{ and } Q_J=U_\f^*P_JU_\f .
$$
The operator  $P_IQ_J$ is Hilbert-Schmidt (see for instance \cite{donoho}, Lemma 2).
\end{ejem}

An intuitive formulation of Heisenberg's uncertainty principle says that a nonzero function and its Fourier transform
cannot be (simultaneously) sharply localized (see \cite{folland}, page 207). We give more precision to this statement below  ( see for instance \cite{donoho}, page 906).

According to Folland and Sitaram \cite{folland}, the idea of using projections $P_I$ and $Q_J$ to obtain a form of the uncertainty principle is due to Fuchs \cite{fuchs}, and it was developed later in a series of papers by Landau, Pollack and Slepian \cite{lp1}, \cite{lp2}, \cite{sp}. See the survey by Folland and Sitaram \cite{folland}.

Donoho and Stark \cite{donoho}  proved that if  $I,J\subset \mathbb{R}^n$
with finite Lebesgue measure and $f\in L^2(\mathbb{R}^n)$ with $\|f\|_2=1$ 
 satisfy that
$$
\int_{\mathbb{R}^n-I}|f(t)|^2 d t < \epsilon_I \ \hbox{ and } \ 
\int_{\mathbb{R}^n-J}|\hat{f}(w)|^2 d w < \epsilon_J
$$ 
then 
$$
|I||J|\ge (1-(\epsilon_I+\epsilon_J))^2.
$$
Donoho and Stark showed several applications of these ideas to signal processing (and the obstruction to the 
existence of an instantaneous frequency).  Smith \cite{smith} generalized these results to a locally compact abelian group $G$ where $I\subset G$ and $J\subset \hat{G}$, the dual group of $G$. The books by Havin and 
J\"oricke \cite{havin},  Hogan and Lakey \cite{hogan}, and Gr\"ochenig \cite{grochenig}  among many others, contain further applications, generalizations and history of the different uncertainty principles.

By an elementary computation using Fubini's theorem, Donoho and Stark  prove that 
$$
\|P_IQ_J\|_{HS}=\sqrt{|I||J|},
$$
where $\|\ \|_{HS}$ is Hilbert-Schmidt norm.
 Next they prove  that 
 $$
 \|P_IQ_J\|\ge 1-\epsilon_I-\epsilon_J.
 $$
 The fact that $\|P_IQ_J\|\le \|P_IQ_J\|_{HS}$ is well known. 

They argue that any bound $c$ such that
 $$
  \|P_IQ_J\|\le c<1
  $$
  is an expression of the uncertainty principle (\cite{donoho}, page 912).
  
Denote by $\p(\h)$ the set of orthogonal projections of the Hilbert space $\h$, also called the Grassmann manifold of $\h$. It is indeed a differentiable manifold of $\b(\h)$ (also in the infinite dimensional setting), with rich geometric structure (see for instance \cite{pr} or \cite{cpr}).  The pairs $(P_I,Q_J)$ might be put in the broader  context   of the sets
  $$
  \c=\{(P,Q): P,Q\hbox{ are orthogonal projections and } PQ \hbox{ is compact}\}.
  $$
This set is a $C^\infty$-submanifold of $\p(\h)\times \p(\h)$.

An application of these geometrical results facts is a form of the uncertainty principle (see Theorem \ref{desigualdad conmutante} below).

Let us describe the content of the paper.

In Section 2 we recall the known facts on the geometry of $\p(\h)$. In section 3 we apply known results \cite{pr}, \cite{cpr}, \cite{pemenoscu} on the Finsler geometry of the Grassmann manifold of $\h$ to the special case of pairs $P_I, Q_J$. We prove that there exists a unique minimal geodesic of the Grassmann manifold of length $\pi/2$ which joins $P_I$ and $Q_J$.  That is, there exists a unique selfadjoint operator $X_{I,J}$ of norm $\pi/2$, which is co-diagonal with respect both to $P_I$ and $Q_J$, such that
$$
e^{iX_{I,J}}P_Ie^{-iX_{I,J}}=Q_J.
$$
The spectrum of the operator $X_{I,J}$ is denumerable and symmetric with respect to the origin. The smallest positive eigenvalue $\gamma(X_{I,J})$ verifies
$$
\cos(\gamma(X_{I,J}))=\|P_IQ_J\|.
$$

As a consequence from the fact that the minimal geodesic has length $\pi/2$, we prove that if $H$ is the logarithm of the Fourier transform in $L^2(\mathbb{R}^n)$, and $I\subset \mathbb{R}^n$ is a set of finite Lebesgue measure, then
$$
\|[H,P_I]\|=\|[H,Q_I]\|\ge \pi/2.
$$
In Section 4 we show  that for any pair of sets $I,J\subset\mathbb{R}^n$ of finite measure, 
one has 
$$
N(P_I)+N(Q_J)=L^2(\mathbb{R}^n),
$$ where the sum is non-direct (the subspaces have infinite dimensional intersection).

\section{Basic properties}
\subsection{Halmos decomposition}

Let $\h$ be a Hilbert space, $\b(\h)$ the algebra of bounded linear operators in $\h$, $\k(\h)$ the ideal of compact operators and $\p(\h)$ the set of selfadjoint (orthogonal) projections, and $\p_\infty(\h)$ the subset of projections whose nullspaces and ranges have infinite dimension. 

A tool that will be useful in the study of the pairs $P_I,Q_J$ is  {\it Halmos decomposition } \cite{halmos}, which is the following orthogonal decomposition of $\h$: given a pair of projections $P$ and $Q$, consider
$$ 
\h_{11}=R(P)\cap R(Q) \ , \ \ \h_{00}=N(P)\cap N(Q) \ , \ \h_{10}=R(P)\cap N(Q) \ , \ \ \h_{01}=N(P)\cap R(Q)
$$
and $\h_0$ the orthogonal complement of the sum of the above. This last subspace is usually called the {\it generic part} of the pair $P, Q$. Note also that
$$
N(P-Q)=\h_{11}\oplus\h_{00}\ , \ \ N(P-Q-1)=\h_{10} \ \hbox{ and } N(P-Q+1)=\h_{01},
$$
so that the generic part depends in fact of the difference $P-Q$. 

Halmos proved that there is an isometric isomorphism between $\h_0$ and a product Hilbert space $\l\times\l$ such that in the above decomposition (putting $\l\times\l$ in place of $\h_0$), the projections are 
$$
P=1\oplus 0 \oplus 1 \oplus 0 \oplus \left(\begin{array}{cc} 1 & 0 \\ 0 & 0 \end{array} \right)
$$
and 
$$
Q=1\oplus 0 \oplus 0 \oplus 1 \oplus \left(\begin{array}{cc} C^2 & CS \\ CS & S^2 \end{array} \right),
$$
where $C=cos(X)$ and $S=sin(X)$ for some operator $0<X\le \pi/2$ in $\l$ with trivial nullspace.

Aparently, the pair $(P,Q)$ belongs to $\c$ if and only if  $\h_{11}$ is finite dimensional and $C=cos(X)$ is compact.
\begin{rem}
If $(P,Q)\in\c$, then the spectral resolution of $X$ can be easily described. Since $0<cos(X)$ is compact, it follows that
$$
X=\sum_n \gamma_n P_n+\frac{\pi}{2}E,
$$
where $0<\gamma_n< \pi/2$ is an increasing (finite or infinite) sequence. For all $n$, $\dim R(P_n)<\infty$, and
$$
R(E)\oplus(\oplus_{n\ge 1} R(P_n))=\l.
$$

\end{rem}

\subsection{Finsler geometry of the Grassmann manifold of $\h$}
Let us recall some basic facts on the differential geometry of the set $\p(\h)$ (see for instance \cite{cpr}, \cite{pr}, \cite{pemenoscu}).

\begin{enumerate}
\item
The space $\p(\h)$ is a homogeneous space under the action of the unitary group $\u(\h)$ by inner conjugation: if $U\in\u(\h)$ and $P\in\p(\h)$, the action is given by
$$
U\cdot P=UPU^*.
$$
This action is locally transitive: it is well known that two projections $P_1, P_2$ such that $\|P_1-P_2\|<1$, are conjugate.  Therefore, since the unitary group $\u(\h)$ is connected,  the orbits of the action coincide with the connected components of $\p(\h)$, which are: for $n\in\mathbb{N}$,  $\p_{n,\infty}(\h)$ (projections of nullity $n$), $\p_{\infty,n}(\h)$ (projections of rank $n$) and $\p_{\infty}(\h)$ (projections of infinite rank and nullity). These components are $C^\infty$-submanifolds of $\b(\h)$.
\item
There is a natural linear connection in $\p(\h)$. If $\dim \h <\infty$, it is the Levi-Civita connection of the Riemannian metric which consists of considering the Frobenius inner product at every tangent space. It is based on the diagonal / co-diagonal decomposition of $\b(\h)$. To be more specific, given $P_0\in\p(\h)$, the tangent space of $\p(\h)$ at $P_0$ consists of all selfadjoint co-diagonal matrices (in terms of  $P_0$). The linear connection in $\p(\h)$ is induced by a reductive structure, where the horizontal elements at $P_0$ (in the Lie algebra of $\u(\h)$: the space of antihermitian elements of $\b(\h)$) are the co-diagonal antihermitian operators. The geodesics of $\p$ which start at $P_0$ are curves of the form
\begin{equation}\label{geodesica}
\delta(t)=e^{itX}P_0e^{-itX},
\end{equation}
with $X^*=X$ co-diagonal with respect to $P_0$.  Observe that $X$ is co-diagonal with respect to every $P_t=\delta(t)$. 
It was proved in \cite{pr} that if $P_0,P_1\in\p(\h)$ satisfy $\|P_0-P_1\|<1$, then there exists a unique geodesic (up to reparametrization) joining $P_0$ and $P_1$. This condition is not necessary for the existence of a unique geodesic. 
\item 
There exists a unique geodesic joining two projections $P$ and $Q$ if and only if
$$
R(P)\cap N(Q)=N(P)\cap R(Q)=\{0\},
$$
(see \cite{pemenoscu}).
\item
If $\h$ is infinite dimensional, the Frobenius metric is not available. However, if one endows each tangent space of $\p(\h)$ with the usual norm of $\b(\h)$, one obtains a continuous (non regular) Finsler metric,
$$
d(P_0,P_1)=\inf\{ \ell(\gamma): \gamma \hbox{  a continuous  piecewise smooth curve in } \p(\h) \hbox{ joining } P_0 \hbox{ and } P_1\}
$$
where $\ell(\gamma)$ denotes the length of $\gamma$ (parametized in the interval $I$):
$$
\ell(\gamma)= \int_I \|\dot{\gamma}(t)\| d t .
$$
 In \cite{pr} it was shown that the geodesics (\ref{geodesica}) remain minimal among their endpoints for all $t$ such that 
$$
|t|\le \frac{\pi}{2\|X\|}.
$$
It can be shown  that $d(P_0,P_1)<\pi/2$ if and only if $\|P_0-P_1\|<1$. In other words,  $\|P_0-P_1\|=1$ if and only if $d(P_0,P_1)=\pi/2$.
\end{enumerate}

\section{Geometry of the pairs $P_I$, $Q_J$}
Lenard proved in \cite{lenard} that  the projections
$P_I,Q_J\in\p(L^2(\mathbb{R}^n,dx))$ defined in Example (\ref{el ejemplo}), satisfy 
\begin{equation}\label{nulo}
R(P_I)\cap N(Q_J)=R(Q_J)\cap N(P_I)=\{0\}.
\end{equation}
Moreover, $\|P_I-Q_J\|=1$.

Therefore one obtains the following:
\begin{teo}
Let $I, J$ be measurable subsets of $\mathbb{R}^n$ of finite measure, and $P_I$, $Q_J$ the above projections.
Then there exists a unique selfadjoint operator $X_{I,J}$ satisfying:
\begin{enumerate}
\item
$\|X_{I,J}\|=\pi/2$.
\item
$X_{I,J}$ is $P_I$ and $Q_J$ co-diagonal. In other words, $X_{I,J}$ maps functions in $L^2(\mathbb{R}^n,dx)$ with support in $I$ to functions with support in $\mathbb{R}^n- I$, and functions such that $\hat{f}$ has support in $J$ to functions such that the Fourier transform has support in $\mathbb{R}^n- J$.
\item
$e^{iX_{I,J}}P_Ie^{-iX_{I,J}}=Q_J$.
\item
If $P(t)$, $t\in[0,1]$ is a smooth curve in $\p(\h)$ with $P(0)=P_I$ and $P(1)=Q_J$, then
$$
\ell(P)=\int_0^1\|\dot{P}(t)\| d t \ge \pi/2.
$$
\end{enumerate}
\end{teo}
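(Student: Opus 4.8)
The plan is to invoke the general Finsler-geometric machinery of Section 2 directly, using Lenard's result (\ref{nulo}) together with $\|P_I-Q_J\|=1$ as the only facts specific to the pair $P_I,Q_J$. First I would record the key dichotomy from item 3 of the list in Section 2.2: since $R(P_I)\cap N(Q_J)=R(Q_J)\cap N(P_I)=\{0\}$, there is a unique geodesic of $\p(\h)$ joining $P_I$ and $Q_J$. By the description of geodesics in (\ref{geodesica}), this geodesic has the form $\delta(t)=e^{itX}P_Ie^{-itX}$ for a selfadjoint $X$ which is $P_I$-co-diagonal; normalizing so that $\delta(1)=Q_J$ fixes $X=X_{I,J}$ up to the usual ambiguity, and the uniqueness of the geodesic will translate into uniqueness of $X_{I,J}$ once we pin down its norm. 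This gives items 2 and 3 immediately: $X_{I,J}$ is $P_I$-co-diagonal by construction, and since $X_{I,J}$ is automatically co-diagonal with respect to every $\delta(t)$ (as remarked after (\ref{geodesica})), in particular it is $Q_J=\delta(1)$-co-diagonal; the translation of ``co-diagonal'' into the support conditions on $f$ and $\hat f$ is just unwinding the definitions $P_I=M_{\chi_I}$, $Q_J=U_\f^*M_{\chi_J}U_\f$.

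Next I would establish item 1, $\|X_{I,J}\|=\pi/2$. Here I would pass to the Halmos decomposition of the pair $(P_I,Q_J)$. By (\ref{nulo}) we have $\h_{10}=\h_{01}=\{0\}$, so $\h$ decomposes as $\h_{11}\oplus\h_{00}\oplus\h_0$ with $\h_0\cong\l\times\l$ and on $\h_0$ the pair takes the canonical $2\times2$ form with $C=\cos(X)$, $S=\sin(X)$, $0<X\le\pi/2$ on $\l$; on $\h_{11}$ and $\h_{00}$ the two projections agree. The co-diagonal exponent realizing the geodesic is then, block by block, zero on $\h_{11}\oplus\h_{00}$ and the standard rotation generator $\begin{pmatrix}0 & -iX\\ iX & 0\end{pmatrix}$ on $\h_0$, so $\|X_{I,J}\|=\|X\|$. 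Thus $\|X_{I,J}\|=\pi/2$ is equivalent to $\|X\|=\pi/2$, i.e. to $\pi/2\in\sigma(X)$. But $\|X\|=\pi/2$ is in turn equivalent to $\|P_I-Q_J\|=1$: indeed on $\h_0$ one computes $\|P-Q\|=\|S\|=\|\sin(X)\|$, which equals $1$ exactly when $\pi/2\in\sigma(X)$, while on $\h_{11}\oplus\h_{00}$ the difference is $0$. Since Lenard's theorem gives $\|P_I-Q_J\|=1$, we conclude $\|X_{I,J}\|=\pi/2$. (Alternatively, one can read item 1 off the last sentence of Section 2.2: $\|P_0-P_1\|=1\iff d(P_0,P_1)=\pi/2$, combined with the fact that the unique geodesic is minimal.) This also secures uniqueness of $X_{I,J}$: among all selfadjoint $P_I$-co-diagonal $X$ with $e^{iX}P_Ie^{-iX}=Q_J$, the reductive/geodesic picture gives a unique one of norm $\le\pi/2$, and here that norm is forced to be exactly $\pi/2$.

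Finally, for item 4 I would combine the minimality statement in item 4 of Section 2.2 with the norm computation just made. The geodesic $\delta(t)=e^{itX_{I,J}}P_Ie^{-itX_{I,J}}$ remains minimal among curves joining its endpoints for $|t|\le \pi/(2\|X_{I,J}\|)=1$; hence on $[0,1]$ its length equals the distance $d(P_I,Q_J)$. Its speed is $\|\dot\delta(t)\|=\|i[X_{I,J},\delta(t)]\|$, which by co-diagonality of $X_{I,J}$ with respect to $\delta(t)$ equals $\|X_{I,J}\|=\pi/2$ for every $t$, so $\ell(\delta)=\pi/2$ and therefore $d(P_I,Q_J)=\pi/2$. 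Consequently any smooth curve $P(t)$, $t\in[0,1]$, from $P_I$ to $Q_J$ satisfies $\ell(P)=\int_0^1\|\dot P(t)\|\,dt\ge d(P_I,Q_J)=\pi/2$. The main obstacle, to the extent there is one, is bookkeeping rather than conceptual: one must be careful that ``the unique geodesic'' supplied by item 3 of Section 2.2 is indeed of the normalized form with $\|X_{I,J}\|\le\pi/2$ (so that the minimality range $|t|\le\pi/(2\|X_{I,J}\|)$ actually reaches $t=1$), and that the Halmos normal form is applied with $\h_{10}=\h_{01}=\{0\}$ so that no eigenvalue $\pm1$ of $P_I-Q_J$ interferes with the identification $\|P_I-Q_J\|=\|\sin X\|$; both points are handled by (\ref{nulo}) and the remark following the Halmos decomposition.
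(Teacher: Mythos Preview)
Your proposal is correct and follows essentially the same route as the paper: invoke Lenard's condition (\ref{nulo}) to get existence and uniqueness of the geodesic via \cite{pemenoscu}, read off items 2 and 3 from the geodesic form, and deduce items 1 and 4 from $\|P_I-Q_J\|=1$. The only difference is cosmetic: the paper obtains $\|X_{I,J}\|=\pi/2$ in one line from the equivalence $\|P_0-P_1\|=1\iff d(P_0,P_1)=\pi/2$ at the end of Section 2.2, whereas you additionally unpack this via the Halmos block form and $\|P_I-Q_J\|=\|\sin X\|$ (and then mention the shortcut yourself as an alternative).
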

\begin{proof}
By the condition (\ref{nulo}) above (\cite{lenard}), it follows from \cite{pemenoscu} that there exists a unique minimal geodesic of $\p(\h)$, of the form
$$
\delta_{I,J}(t)=e^{itX_{I,J}}P_Ie^{itX_{I,J}}
$$
with $X_{I,J}^*=X_{I,J}$ co-doagonal with respect to $P_I$ (and $Q_J$) such that
$$
\delta_{I,J}(1)=Q_J.
$$
Condition 4. above is the minimality property of $\delta_{I,J}$. Finally, the fact that $\|P_I-Q_J\|=1$ means that $\|X_{I,J}\|=\pi/2$. 
\end{proof}

\begin{rem}
 It is known \cite{folland} that $\lambda_1=\|P_IQ_JP_I\|=\|P_IQ_J\|^2<1$, and moreover $\sqrt{\lambda_1}$ equals the cosine of the angle between the subspaces $R(P_I)$ and $R(Q_J)$.
 
 One can also relate this number $\lambda_1$ with the operator $X_{I,J}$. Using Halmos decomposition (recall that it consists only of $\h_{00}$ and the generic part $\h_0$ in this case),
 $$
 P_IQ_JP_I=0\oplus \left( \begin{array}{cc} C^2 & 0 \\ 0 & 0 \end{array} \right)
 $$
 and thus $\lambda_1=\|cos(X)\|^2$. We shall see below that the spectrum of $X$ is a strictly  
 increasing sequence of positive eigenvalues 
 $\gamma_n\to \pi/2$, with finite multiplicity.
 Moreover, since $P_IQ_JP_I$
 belongs to $\b_1(\h)$, it follows that $C\in\b_2(\l)$. Thus 
 $$
 \{cos(\gamma_n)\}\in\ell^2.
 $$
 \end{rem}
For a given $P\in\p(\h)$, let $\a_P$ be 
$$
\a_P=\{X\in\b(\h): [X,P] \hbox{ is compact}\}.
$$
Apparently $\a_P$ is a C$^*$-algebra.
 \begin{teo}
 Let $I, J$ be measurable subsets of $\mathbb{R}^n$ of finite Lebesgue measure.
 \begin{enumerate}
 \item
 The selfadjoint operator $X_{I,J}$ has closed infinite dimensional range, in particular it is not compact. 
 \item
 Let $I_0$ be another measurable set with finite measure such that $|I\cap I_0|=0$, and let $P_0=P_{I_0}$.  Then, the  commutant $[X_{I,J},P_0]$ is compact.
\end{enumerate}
 \end{teo}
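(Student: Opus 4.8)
The plan is to read both statements off the Halmos decomposition \cite{halmos} of the pair $(P_I,Q_J)$. By Lenard's identity (\ref{nulo}), $R(P_I)\cap N(Q_J)=N(P_I)\cap R(Q_J)=\{0\}$, and since $\|P_IQ_JP_I\|=\|P_IQ_J\|^2<1$ also $R(P_I)\cap R(Q_J)=\{0\}$; hence $\h=\h_{00}\oplus\h_0$ with $\h_{00}=N(P_I)\cap N(Q_J)$ and, identifying the generic part $\h_0$ with $\l\times\l$,
$$
P_I=\left(\begin{array}{cc}1&0\\0&0\end{array}\right),\qquad Q_J=\left(\begin{array}{cc}C^2&CS\\CS&S^2\end{array}\right),
$$
where $C=\cos X$, $S=\sin X$, with $0<X\le\pi/2$ in $\l$ and $\ker X=\{0\}$. (We may assume $|I|,|J|>0$, otherwise $X_{I,J}=0$.) Two preliminary observations: $\|C\|=\|P_IQ_J\|<1$; and $\l$ is infinite dimensional, because a nontrivial kernel of $C$ would produce, through the block form of $Q_J$, a nonzero vector of $R(P_I)\cap N(Q_J)$ against (\ref{nulo}), so $C$ is injective, and were $\dim\l<\infty$ then $C$ would be invertible and $\|X\|<\pi/2$, contradicting $\|X\|=\pi/2$ (which follows from $\|P_I-Q_J\|=1$).

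\emph{Part 1.} First I would identify $X_{I,J}$ in this decomposition. Co-diagonality of $X_{I,J}$ with respect to both $P_I$ and $Q_J$ sends $\h_{00}$ into $R(P_I)\cap R(Q_J)=\{0\}$, so $X_{I,J}$ vanishes on $\h_{00}$ and (being selfadjoint) maps $\h$ into $\h_0$. On $\h_0$ I would check that $Y=\left(\begin{array}{cc}0&iX\\-iX&0\end{array}\right)$ is selfadjoint, $P_I$-co-diagonal, has norm $\|X\|=\pi/2$, satisfies $e^{iY}P_Ie^{-iY}=Q_J$ (the only computation being the standard $2\times2$ one, using $e^{iY}=\left(\begin{array}{cc}\cos X&-\sin X\\\sin X&\cos X\end{array}\right)$), and, being $P_I$-co-diagonal and commuting with $e^{iY}$, is $Q_J$-co-diagonal too; by the uniqueness in the preceding theorem, $X_{I,J}$ equals $0\oplus Y$, so $X_{I,J}|_{\h_0}=Y$. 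Since $\|C\|<1$, $\inf\sigma(X)=\arccos(\|C\|)>0$, so $X$, hence $Y$, is boundedly invertible; therefore $N(X_{I,J})=\h_{00}$ and $R(X_{I,J})=\h_{00}^{\perp}=\h_0$, which is closed and infinite dimensional. As a compact operator with closed range has finite rank, $X_{I,J}$ is not compact.

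\emph{Part 2.} I would first reduce the commutator to a single corner. Since $P_0=P_{I_0}$ and $P_I$ are multiplication operators by characteristic functions of essentially disjoint sets, $P_0P_I=P_IP_0=0$; writing $P_0=(1-P_I)P_0=P_0(1-P_I)$ and using $(1-P_I)X_{I,J}(1-P_I)=0$ ($X_{I,J}$ being $P_I$-co-diagonal), one gets
$$
[X_{I,J},P_0]=P_IX_{I,J}P_0-P_0X_{I,J}P_I=(P_0X_{I,J}P_I)^{*}-P_0X_{I,J}P_I,
$$
so it suffices to show $P_{I_0}X_{I,J}P_I$ is compact. Inserting $Q_J+(1-Q_J)$ in the middle: the term $P_{I_0}X_{I,J}Q_JP_I$ is compact because $Q_JP_I=(P_IQ_J)^{*}$ is Hilbert--Schmidt (Example \ref{el ejemplo}); and since $X_{I,J}$ is $Q_J$-co-diagonal, $X_{I,J}(1-Q_J)=Q_JX_{I,J}(1-Q_J)$, so the term $P_{I_0}X_{I,J}(1-Q_J)P_I$ equals $(P_{I_0}Q_J)\,X_{I,J}(1-Q_J)P_I$, which is compact because $P_{I_0}Q_J$ is Hilbert--Schmidt, $\|P_{I_0}Q_J\|_{HS}=\sqrt{|I_0||J|}$ (Donoho--Stark, \cite{donoho}). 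Hence $P_{I_0}X_{I,J}P_I$ is compact, and so is $[X_{I,J},P_0]$.

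The main obstacle is Part 1: knowing $X_{I,J}$ only as the logarithm of the minimal geodesic is not enough, and one must locate it precisely on the generic part — via the Halmos normal form together with the uniqueness of the geodesic — in order to see that its spectrum is bounded away from $0$ off its kernel; the two inputs that make this work are $\|P_IQ_J\|<1$ and $\dim\l=\infty$. Part 2 is, by comparison, a short manipulation, the only real ingredients being that $P_{I_0}Q_J$ and $Q_JP_I$ are both Hilbert--Schmidt and that $Q_J$-co-diagonality of $X_{I,J}$ lets one slide a factor $Q_J$ next to $P_{I_0}$.
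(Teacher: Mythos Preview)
Your argument is correct in both parts. Part~1 follows the same line as the paper: both identify $X_{I,J}$ on the Halmos decomposition as $0\oplus\left(\begin{smallmatrix}0&\pm iX\\ \mp iX&0\end{smallmatrix}\right)$ and read off that $X$ is bounded below (you via $\|C\|=\|P_IQ_J\|<1$, the paper via the eigenvalues $\gamma_n\nearrow\pi/2$), so that $R(X_{I,J})=\h_0$ is closed. For the infinite-dimensionality of $\l$ you use that $C$ is injective and would be invertible in finite dimensions, forcing $\|X\|<\pi/2$; the paper argues instead that $C$ cannot have finite rank because $P_IQ_JP_I$ does not (Lenard). These are equivalent ways of packaging the same spectral information.

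Part~2 is where you take a genuinely different route. The paper observes that $P_I,Q_J$ lie in the C$^*$-algebra $\a_{P_0}=\{X:[X,P_0]\in\k(\h)\}$, hence so does $e^{2iX_{I,J}}=S_{Q_J}S_{P_I}$, and then invokes functional calculus to recover $X_{I,J}$ from $e^{2iX_{I,J}}$ inside $\a_{P_0}$. Your argument is more hands-on: you use $P_0P_I=0$ and $P_I$-co-diagonality to reduce $[X_{I,J},P_0]$ to the single block $P_0X_{I,J}P_I$, then split with $Q_J+(1-Q_J)$ and use $Q_J$-co-diagonality to slide a $Q_J$ next to $P_0$; the Hilbert--Schmidt property of $Q_JP_I$ and of $P_{I_0}Q_J$ finishes it. This is more elementary and sidesteps a delicate point in the paper's approach: since $\pm\pi/2$ both lie in $\sigma(X_{I,J})$ (as $\gamma_n\to\pi/2$) and $e^{2i(\pi/2)}=e^{2i(-\pi/2)}=-1$, the map $\lambda\mapsto e^{2i\lambda}$ is not injective on $\sigma(X_{I,J})$, so expressing $X_{I,J}$ as a (holomorphic or even continuous) function of $e^{2iX_{I,J}}$ needs more care than the paper indicates. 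Your direct computation avoids this entirely.
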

 \begin{proof}
 Easy matrix computations (\cite{pemenoscu}) show that, in the  decomposition $\h=\h_{00}\oplus (\l\times\l)$, $X_{I,J}$ is of the form
 $$
 X_{I,J}=0\oplus \left( \begin{array}{cc} 0 & -iX \\ iX & 0 \end{array} \right).
$$
 Note that the spectrum of this operator is symmetric  with respect to the origin. Indeed, if $V$ equals the symmetry 
 $$
 V=1\oplus \left( \begin{array}{cc} 0 & 1 \\ 1 & 0 \end{array} \right),
 $$
 then apparently $VX_{I,J}V=-X_{I,J}$. Also note that
 $$
 X_{I,J}^2=0\oplus \left( \begin{array}{cc} X^2 & 0 \\ 0 & X^2 \end{array} \right).
 $$
 Therefore the spectrum of $X_{I,J}$ is
 $$
 \sigma(X_{I,J})=\{0\}\cup\{ \gamma_n: n\ge 1\}\cup \{ -\gamma_n: n\ge 1\},
 $$
 with $0$ of infinite multiplicity, and the multiplicity of $\gamma_n$ equal to the multiplicity of $-\gamma_n$, and finite.
 What matters here, is that the set $\{\gamma_n:n\ge 1\}$ is infinite, and is therefore an increasing 
 sequence converging to $\pi/2$. This holds because otherwise, 
 the operator $C$ would have finite rank, and therefore $P_IQ_JP_I$ would be of finite rank, which is not the case 
 (see \cite{lenard}). Thus $X_{I,J}$ has closed range. of infinite dimension.
 
  Note that $P_I$ and $Q_J$ satisfy that 
    $P_IP_0=0$ and  $Q_JP_0=Q_JP_{I_0}$ is compact, and therefore $P_I, Q_J\in\a_{P_0}$. Thus the  symmetries $S_{P_I}, S_{Q_J}$ belong to $a_{P_0}$.
Since  $S_{Q_J}=e^{i2X_{I,J}}S_{P_I}$, this implies that
$$
e^{i2X_{I,J}}\in\a_{P_0}.
$$
By the spectral picture of $X_{I,J}$ it is clear that  $X_{I,J}$ can be obtained as an holomorphic function of $e^{i2X_{I,J}}$. 
Since $\a_{P_0}$ is a C$^*$-algebra, this implies that $X_{I,J}\in\a_{P_0}$. 
 \end{proof}

 Let us relate the operator $X_{I,J}$ with the mathematical version of the uncertainty principle, according to \cite{donoho} and \cite{folland}. 
 
 Let $A\in\b(\h)$ be an operator with closed range, the {\it reduced minimum modulus} $\gamma_A$ of $A$ is the positive number
 $$
 \gamma_A=\min\{\|A\xi\|: \xi\in N(A)^\perp, \|\xi\|=1\}=\min\{|\lambda|: \lambda\in\sigma(A), \lambda\ne 0\}.
 $$
 Donoho and Stark \cite{donoho} underline the role of the number $\|Q_JP_I\|$ and consider any constant $c$ such that $\|Q_JP_I\|\le c$ a manifestation of the (mathematical) uncertainty principle. By the above Remark, we have:
 \begin{coro}
  With the current notations, 
  $$
  \|Q_JP_I\|=\cos(\gamma_{X_{I.J}}).
  $$
  \end{coro}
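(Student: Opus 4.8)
The plan is to assemble the corollary directly from the two facts already established in the preceding Remark and Theorem. Recall that the Remark gives $\lambda_1=\|P_IQ_JP_I\|=\|P_IQ_J\|^2=\|\cos(X)\|^2$, and that the spectrum of $X$ is a strictly increasing sequence $\gamma_n\to\pi/2$ of eigenvalues of finite multiplicity. Since $t\mapsto\cos(t)$ is strictly decreasing on $(0,\pi/2]$, the supremum of $\cos(\gamma_n)$ over the spectrum of $X$ is attained at the smallest eigenvalue $\gamma_1$, so $\|\cos(X)\|=\cos(\gamma_1)$ and hence $\|Q_JP_I\|=\cos(\gamma_1)$.

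Next I would identify $\gamma_1$ with the reduced minimum modulus $\gamma_{X_{I,J}}$. By the spectral description in the previous Theorem, $\sigma(X_{I,J})=\{0\}\cup\{\gamma_n:n\ge 1\}\cup\{-\gamma_n:n\ge 1\}$, where $0$ has infinite multiplicity (so $X_{I,J}$ is not injective) and each $\pm\gamma_n$ has finite multiplicity; moreover $X_{I,J}$ has closed range. Therefore, by the formula $\gamma_A=\min\{|\lambda|:\lambda\in\sigma(A),\ \lambda\ne 0\}$ valid for operators with closed range, we get $\gamma_{X_{I,J}}=\min_{n\ge 1}\gamma_n=\gamma_1$, using again that $(\gamma_n)$ is increasing. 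Combining this with the identity from the first paragraph yields $\|Q_JP_I\|=\cos(\gamma_1)=\cos(\gamma_{X_{I,J}})$, which is the claim.

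There is no real obstacle here: both ingredients (the matrix form of $P_IQ_JP_I$ in the Halmos decomposition, and the spectral picture of $X_{I,J}$ with $0$ of infinite multiplicity and closed range) are already in hand from the Remark and the Theorem. The only point requiring a line of care is the monotonicity argument — that both $\|\cos(X)\|$ and $\gamma_{X_{I,J}}$ are controlled by the \emph{same} extreme eigenvalue $\gamma_1$, one as the maximum of $\cos(\gamma_n)$ and the other as the minimum of $\gamma_n$ — but this is immediate from the strict monotonicity of cosine on $[0,\pi/2]$ together with $0<\gamma_n<\pi/2$. Hence the proof is essentially a one-paragraph bookkeeping of previously proved statements.
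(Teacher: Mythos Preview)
Your argument is correct and follows exactly the paper's approach: identify $\gamma_{X_{I,J}}$ with the smallest positive eigenvalue $\gamma_1$ via the spectral description of $X_{I,J}$, and combine this with the Remark's identity $\|P_IQ_J\|=\|\cos(X)\|$ together with the monotonicity of cosine on $(0,\pi/2]$. The paper's proof is in fact a single sentence recording $\gamma_{X_{I,J}}=\gamma_1$; you have simply made the surrounding bookkeeping explicit.
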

  \begin{proof}
  Indeed, in the above description of the spectrum of $X_{I,J}$, the reduced minimum modulus  $\gamma_{X_{I.J}}$ of $X_{I,J}$ coincides with $\gamma_1$.
  \end{proof} 
 Let $X_{I,J}^0$ be the restriction of $X_{I,J}$ to the generic part of $P_I$ and $Q_J$, i.e.,  its restriction to $N(X_{I,J})^\perp$. In Halmos decomposition
$$
X_{I,J}^0=\left( \begin{array}{cc} 0 & -iX \\ iX & 0 \end{array} \right) .
$$
Recall the formula by Donoho and Stark \cite{donoho}
$$
\|P_IQ_J\|_{HS}=|I|^{1/2}|J|^{1/2}.
$$
From the preceeding facts,  it also follows:
\begin{coro}
With the current notations
$$
|I|^{1/2}|J|^{1/2}=\|\cos(X)\|_{HS}=\frac{1}{\sqrt{2}}\|cos(X_{I,J}^0)\|_{HS}=\{\sum_{n=1}^\infty \frac12\cos(\gamma_n)^2\}^{1/2}.
$$
\end{coro}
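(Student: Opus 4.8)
The plan is to push everything through the Halmos normal form of the pair $(P_I,Q_J)$ and then expand the positive compact operator $\cos X$ in its eigenbasis; the only external ingredient is the Donoho--Stark identity $\|P_IQ_J\|_{HS}=|I|^{1/2}|J|^{1/2}$. Recall that the Halmos model of $(P_I,Q_J)$ has only the two summands $\h=\h_{00}\oplus(\l\times\l)$: the corners $\h_{10}$ and $\h_{01}$ vanish by Lenard's relations (\ref{nulo}), and $\h_{11}=R(P_I)\cap R(Q_J)$ vanishes because a nonzero $f$ cannot have both $f$ and $\hat f$ supported on sets of finite measure. In this model
$$
P_I=0\oplus\left(\begin{array}{cc}1&0\\0&0\end{array}\right),\qquad Q_J=0\oplus\left(\begin{array}{cc}C^2&CS\\CS&S^2\end{array}\right),\qquad C=\cos X,\ \ S=\sin X,
$$
with $0<X\le\pi/2$ and $N(X)=\{0\}$.

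For the first equality, compute $P_IQ_J=0\oplus\left(\begin{array}{cc}C^2&CS\\0&0\end{array}\right)$, so, summing the Hilbert--Schmidt norms of the blocks,
$$
\|P_IQ_J\|_{HS}^2=\|C^2\|_{HS}^2+\|CS\|_{HS}^2=\mathrm{tr}(C^4)+\mathrm{tr}(C^2S^2)=\mathrm{tr}\big(C^2(C^2+S^2)\big)=\mathrm{tr}(C^2)=\|\cos X\|_{HS}^2 .
$$
The middle steps use that $C$ and $S$ commute and that $C^2+S^2=1$, and are legitimate at the level of traces of positive operators (all of them finite, by Donoho--Stark). Together with $\|P_IQ_J\|_{HS}=|I|^{1/2}|J|^{1/2}$ this gives $|I|^{1/2}|J|^{1/2}=\|\cos X\|_{HS}$, and in passing re-proves $C\in\b_2(\l)$.

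For the second equality, use the block form $X_{I,J}^0=\left(\begin{array}{cc}0&-iX\\iX&0\end{array}\right)$, so that $(X_{I,J}^0)^2=\left(\begin{array}{cc}X^2&0\\0&X^2\end{array}\right)$; since $\cos$ is an even entire function and $X$ is bounded, the norm-convergent power series yields $\cos(X_{I,J}^0)=\left(\begin{array}{cc}\cos X&0\\0&\cos X\end{array}\right)$, hence $\|\cos(X_{I,J}^0)\|_{HS}^2=2\|\cos X\|_{HS}^2$, i.e. $\|\cos X\|_{HS}=\frac{1}{\sqrt2}\|\cos(X_{I,J}^0)\|_{HS}$. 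For the last equality, take the spectral decomposition $X=\sum_n\gamma_nP_n$ (with $\dim R(P_n)<\infty$ and no $\pi/2$-part, as shown above), so that $\cos X=\sum_n\cos(\gamma_n)P_n$ and $\|\cos X\|_{HS}^2=\sum_n\cos(\gamma_n)^2\dim R(P_n)$; enumerating instead the spectrum of $X_{I,J}^0$ with multiplicity --- which by the $\pm$ symmetry repeats each eigenvalue of $X$ twice --- gives $\|\cos(X_{I,J}^0)\|_{HS}^2=\sum_n\cos(\gamma_n)^2$, and the factor $\frac12$ in the stated formula is exactly this doubling.

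I do not expect a genuine obstacle: the heavy lifting is the Halmos normal form, which is already in hand, together with the single identity $C^2+S^2=1$. The two points that deserve a word of care are (i) that all three corners $\h_{11},\h_{10},\h_{01}$ are trivial for this particular pair, so the model has precisely the two displayed summands, and (ii) that every trace and Hilbert--Schmidt norm above is finite --- handed to us respectively by Lenard's theorem and by the Donoho--Stark identity. The only bookkeeping subtlety is the convention for the index $n$ in the final sum, namely distinct eigenvalues of $X$ versus eigenvalues of $X_{I,J}^0$ counted with multiplicity, which is what produces the $\frac12$.
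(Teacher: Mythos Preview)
Your proof is correct and follows essentially the same route as the paper: both push the computation through the Halmos model, reduce $\|P_IQ_J\|_{HS}^2$ to $\mathrm{Tr}(C^2)$, and then identify $\cos(X_{I,J}^0)^2$ with the block-diagonal operator $\left(\begin{smallmatrix}C^2&0\\0&C^2\end{smallmatrix}\right)$. The only cosmetic difference is that the paper writes $\|P_IQ_J\|_{HS}^2=\mathrm{Tr}(P_IQ_JP_I)$ and reads off $\mathrm{Tr}(C^2)$ from the single nonzero block of $P_IQ_JP_I$, whereas you expand $P_IQ_J$ and use $C^2+S^2=1$; the content is identical.
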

\begin{proof}
$$
|I||J|=\|P_IQ_J\|^2_{HS}=Tr(P_IQ_JP_I)=Tr(C^2)=\frac12 Tr\left( \begin{array}{cc} C^2 & 0 \\ 0 & C^2 \end{array}\right)=\frac12 Tr(\cos(X_{I,J}^0)^2).
$$
\end{proof}

This co-diagonal exponent  $X_{I,J}$ (with respect both to $P_I$ and $Q_J$) has  interesting features when $I=J$ and $|I|<\infty$. 
 In this case  denote by $X_I=X_{I,I}$; then,   we have two unitary operators intertwining $P_I$ and $Q_I$. 
 Namely, the Fourier transform $U_{\f}$ and the exponential $e^{iX_I}$,
 $$
 U_\f^*P_IU_\f=Q_I= e^{iX_I}P_Ie^{-iX_I}.
 $$
  Let $H=H^*$ be the natural logarithm of the Fourier transform, $e^{iH}=U_\f$. Namely, writing $E_1$, $E_{-1}$, $E_i$ and $E_{-i}$ the eigenprojections of $U_\f$, 
$$
H=-\pi  E_{-1}+\frac{\pi}{2} E_i- \frac{\pi}{2} E_{-i}.
$$
Note that $\|H\|=\pi$.  Thus,  one obtains a smooth path joining $P_I$ and $Q_I$:
 $$
 \varphi(t)=e^{-itH}P_Ie^{itH}.
 $$
 and, apparently,  $\varphi(1)=Q_I$.
 
 Since the Fourier transform intertwines $P_I$ and $Q_J$, the norm of its commutant with either of these projections can be regarded as a measure of non commutativity between $P_I$ and $Q_J$:
 \begin{teo}\label{desigualdad conmutante}
  For any Lebesgue measurable set $I\subset\mathbb{R}^n$ with $|I|<\infty$, one has
  $$
  \|[H,P_I]\|=\|[H,Q_I]\|\ge \pi/2.
  $$
 \end{teo}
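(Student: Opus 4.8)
The plan is to exhibit a smooth curve of projections joining $P_I$ and $Q_I$ whose Finsler length equals $\|[H,P_I]\|$, and then to invoke the length lower bound already established (item~4 of the first theorem of this section, taken with $J=I$).

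First I would dispose of the equality $\|[H,P_I]\|=\|[H,Q_I]\|$. Since $H=-\pi E_{-1}+\frac{\pi}{2}E_i-\frac{\pi}{2}E_{-i}$ is a function of the eigenprojections of $U_\f$, it commutes with $U_\f$ and with $U_\f^*$. As $Q_I=U_\f^*P_IU_\f$, a direct computation gives
$$[H,Q_I]=HU_\f^*P_IU_\f-U_\f^*P_IU_\f H=U_\f^*(HP_I-P_IH)U_\f=U_\f^*[H,P_I]U_\f,$$
and since $U_\f$ is unitary the two commutator norms coincide.

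For the inequality, consider $\varphi(t)=e^{-itH}P_Ie^{itH}$ for $t\in[0,1]$. Because $e^{-itH}$ is unitary, $\varphi(t)\in\p(\h)$ for every $t$, and $\varphi$ is a (real-analytic, in particular smooth) curve in $\p(\h)$ with $\varphi(0)=P_I$ and $\varphi(1)=U_\f^*P_IU_\f=Q_I$. Differentiating, and using that $H$ commutes with $e^{\pm itH}$,
$$\dot\varphi(t)=-iH\varphi(t)+i\varphi(t)H=-i[H,\varphi(t)],\qquad [H,\varphi(t)]=e^{-itH}(HP_I-P_IH)e^{itH}=e^{-itH}[H,P_I]e^{itH}.$$
Hence $\|\dot\varphi(t)\|=\|[H,P_I]\|$ for all $t$, so the length of $\varphi$ is $\ell(\varphi)=\int_0^1\|[H,P_I]\|\,d t=\|[H,P_I]\|$. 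Now, by (\ref{nulo}) with $J=I$ (Lenard), the pair $(P_I,Q_I)$ satisfies $R(P_I)\cap N(Q_I)=R(Q_I)\cap N(P_I)=\{0\}$ and $\|P_I-Q_I\|=1$, so the first theorem of this section applies; its item~4 asserts that every smooth curve in $\p(\h)$ joining $P_I$ and $Q_I$ has length at least $\pi/2$. Applying this to $\varphi$ yields $\|[H,P_I]\|=\ell(\varphi)\ge\pi/2$.

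There is essentially no serious obstacle here; the only point requiring care is the constancy of the speed $\|\dot\varphi(t)\|$, which rests precisely on the fact that $H$ commutes with its own exponential. This is what identifies the commutator norm with the length of $\varphi$, so that the known lower bound $\pi/2$ for the geodesic distance from $P_I$ to $Q_I$ (equivalently, the fact that $\|P_I-Q_I\|=1$) transfers directly to $\|[H,P_I]\|$.
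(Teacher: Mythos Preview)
Your proof is correct and follows essentially the same route as the paper: you use the same curve $\varphi(t)=e^{-itH}P_Ie^{itH}$, observe that its speed is constantly $\|[H,P_I]\|$ because $H$ commutes with $e^{\pm itH}$, and then invoke the length lower bound $\pi/2$ (the paper phrases this as minimality of the geodesic $\delta_I$, you cite item~4 of the first theorem with $J=I$, which is the same fact). The equality of the two commutator norms is also obtained identically, via conjugation by $U_\f$.
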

\begin{proof}
 The geodesic $\delta_I$ with exponent $X_I$ is the shortest curve in $\p(\h)$ joining $P_I$ and $Q_I$. Its length is $\pi/2$. Then
 $$
 \pi/2\le \ell(\varphi)=\int_0^1\|\dot{\varphi}(t)\| d t =\int_0^1\|e^{itH}[H,P_I]e^{-itH}\| d t=\|[H,P_I]\|.
 $$
 Note that 
 $$
 U_\f^*[H,P_I]U_\f=[H,U_\f^* P_IU_\f]=[H,Q_I]
 $$
 because $U_\f$ and $H$ commute.
\end{proof}
\begin{rem}

\noindent
\begin{enumerate}
\item
We may write $H$ in terms of $U_\f$ using the well known formulas
$$
E_{-1}=\frac14(1-U_\f+U_\f^2-U_\f^3), \ E_i=\frac14(1-i U_\f-U_\f^2+ i U_\f^3) , \ E_{-i}=\frac14(1+i U_\f- U_\f^2-i U_\f^3),
$$
and thus
$$
H=\frac{\pi}{4}\{-1 +(1+i)U_\f-U_\f^2+(1+i)U_\f^3\}.
$$
Then
$$
[H,P_I]=\frac{\pi}{4}\{(1+i)[U_\f,P_I]-[U_\f^2,P_I]+(1+i)[U_\f^3,P_I]\}.
$$
The inequality in Corollary \ref{desigualdad conmutante} can be written
$$
\|(1+i)[U_\f,P_I]-[U_\f^2,P_I]+(1+i)[U_\f^3,P_I]\|\ge 2.
$$
\item
In the special case when the set $I$ is (essentially) symmetric with respect to the origin, 
$P_I$ commutes with $U_\f^2$, so that 
$$
[U_\f^2,P_I]=0 \ \hbox{ and } \ \ [U_\f^3,P_I]=[U_\f,P_I]U_\f^2=U_\f^2[U_\f,P_I]
$$
one has
$$
[H,P_I]=\frac{(1+i)\pi}{4}[U_\f,P_I](1+U_\f^2).
$$
The operator $U_\f^2f(x)=f(-x)$ is a symmetry, then $\frac12(1+U_\f^2)$ is the orthogonal projection $E_e$ onto the the subspace of essentially even functions ($f(x)=f(-x)$ $a.e.$). Then one can write
$$
[H,P_I]=\frac{(1+i)\pi}{2}[U_\f,P_I]E_e=\frac{(1+i)\pi}{2}E_e[U_\f,P_I].
$$

\end{enumerate}
\end{rem}

\begin{coro}
Suppose that $I$ is essentially symmetric, with finite measure.
\begin{enumerate}
\item
$$
\|E_e[U_\f,P_I]\|=\|E_e[U_\f,P_I]E_e\|\ge \frac{1}{\sqrt{2}}.
$$
\item
$$
\|E_eP_I-E_eQ_I\|\ge\frac{1}{\sqrt{2}},
$$
where $E_eP_I=P_IE_e$ and $E_eQ_I=Q_IE_e$ are orthogonal projections.
\end{enumerate}
\end{coro}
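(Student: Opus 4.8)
The plan is to obtain both inequalities directly from Theorem~\ref{desigualdad conmutante} ($\|[H,P_I]\|\ge\pi/2$) combined with the factorisation of $[H,P_I]$ established in the Remark above, which is available precisely because $I$ is essentially symmetric: $[H,P_I]=\frac{(1+i)\pi}{2}E_e[U_\f,P_I]=\frac{(1+i)\pi}{2}[U_\f,P_I]E_e$. Everything else is bookkeeping with commutation relations.

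First I would dispose of the parenthetical claims in part (2), that $E_eP_I=P_IE_e$ and $E_eQ_I=Q_IE_e$ are orthogonal projections. Since $I$ is essentially symmetric, $P_I$ commutes with $U_\f^2$, hence with $E_e=\frac12(1+U_\f^2)$, so $E_eP_I=P_IE_e$ is a product of two commuting orthogonal projections and is therefore an orthogonal projection. Likewise $U_\f$ commutes with $U_\f^2$, hence with $E_e$, and since $Q_I=U_\f^*P_IU_\f$ it follows that $Q_I$ commutes with $E_e$ as well, so $E_eQ_I=Q_IE_e$ is an orthogonal projection too. Below I use freely that $E_e$ commutes with each of $U_\f$, $P_I$ and $Q_I$.

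For part (1): comparing the two displayed forms of $[H,P_I]$ gives $E_e[U_\f,P_I]=[U_\f,P_I]E_e$, whence $E_e[U_\f,P_I]E_e=[U_\f,P_I]E_e^2=[U_\f,P_I]E_e=E_e[U_\f,P_I]$; thus $E_e[U_\f,P_I]$ and $E_e[U_\f,P_I]E_e$ are literally the same operator, so they have the same norm. Taking norms in $[H,P_I]=\frac{(1+i)\pi}{2}E_e[U_\f,P_I]$ and using $|1+i|=\sqrt2$ together with Theorem~\ref{desigualdad conmutante} yields $\|E_e[U_\f,P_I]\|=\frac{\sqrt2}{\pi}\|[H,P_I]\|\ge\frac{1}{\sqrt2}$. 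For part (2), the key identity is $[U_\f,P_I]=U_\f(P_I-Q_I)$, immediate from $Q_I=U_\f^*P_IU_\f$ and $U_\f U_\f^*=1$; multiplying on the left by $E_e$ and using that $E_e$ commutes with $U_\f$, $P_I$ and $Q_I$ gives $E_e[U_\f,P_I]=U_\f(E_eP_I-E_eQ_I)$, and since $U_\f$ is unitary, $\|E_eP_I-E_eQ_I\|=\|E_e[U_\f,P_I]\|\ge\frac{1}{\sqrt2}$ by part (1).

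There is no real obstacle here: the statement is almost a corollary of the Remark and of Theorem~\ref{desigualdad conmutante}. The only point requiring any care is the verification that the $E_e$-compressions of $P_I$ and of $Q_I$ are again orthogonal projections, which is exactly where the essential symmetry of $I$ enters (for $P_I$ via $[P_I,U_\f^2]=0$, and for $Q_I$ via the commutation of $U_\f$ with $U_\f^2$).
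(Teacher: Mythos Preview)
Your proof is correct and follows essentially the same route as the paper: both use the factorisation $[H,P_I]=\frac{(1+i)\pi}{2}E_e[U_\f,P_I]$ from the preceding Remark together with Theorem~\ref{desigualdad conmutante} to get the bound, and the identity $E_e[U_\f,P_I]E_e=U_\f E_e(P_I-Q_I)E_e$ to pass to part~(2). Your write-up is in fact more complete than the paper's, since you also spell out why $E_e[U_\f,P_I]=E_e[U_\f,P_I]E_e$ and why $E_eP_I$, $E_eQ_I$ are orthogonal projections.
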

\begin{proof}
Recall that $E_e$ and $U_\f$ commute. Then
$$
E_e[U_\f,P_I]E_e=E_e(U_\f P_I-P_IU_\f)E_e
=U_\f E_e(P_I -U_\f^*P_IU_\f) E_e
$$
$$
=U_\f E_e(P_I-Q_I) E_e.
$$
where $E_e$, as well as $U_\f$, and thus also $Q_I=U_\f^* P_I U_\f$ commute with $E_e$.
\end{proof} 
The ranges of these two orthogonal projections $E_eP_I$ and $E_eQ_I$ consist of the elements of $L^2$ which are essentially even and 
vanish (essentially) outside $I$, and the analogous subspace for the Fourier transform.
\section{Spatial properties of $P_I$ and $Q_J$}

Let us return to the general setting ($I$ not necessarily equal to $J$). The ranges and nullspaces of $P_I$ and $Q_J$ have several  interesting properties. First we need the following lemma:
\begin{lem}
 Let $P, Q$ be orthogonal projections such that $\|P-Q\|=1$. Then one and only one of the following conditions hold: 
\begin{enumerate}
\item
$N(P)+R(Q)=\h$, with non direct sum (and this is equivalent to $R(P)+N(Q)$ being a direct sum and a closed proper subspace of $\h$).
\item 
$R(P)+N(Q)=\h$, with non direct sum (and this is equivalent to $N(P)+R(Q)$ being a direct sum and a closed proper subspace of $\h$). 
\item
$R(P)+N(Q)$ is non closed (and this is equivalent to $N(P)+R(Q)$ being non closed).
\end{enumerate}
 \end{lem}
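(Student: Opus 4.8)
The plan is to extract everything from the Halmos decomposition of the pair $(P,Q)$ recalled in Section~2. On the non-generic blocks all the assertions are immediate: $R(P)\supseteq\h_{11}$, $N(Q)\supseteq\h_{00}$, $\h_{10}\subseteq R(P)\cap N(Q)$, and both $R(P)$ and $N(Q)$ are orthogonal to $\h_{01}$ (symmetrically for $N(P)$, $R(Q)$ with the roles of $\h_{10}$ and $\h_{01}$ interchanged), so every statement reduces to the generic part $\h_0\cong\l\times\l$, where $P$ and $Q$ are the standard $2\times2$ operator matrices built from $C=\cos X$, $S=\sin X$, with $0<X\le\pi/2$, $N(X)=\{0\}$. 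A one-line matrix computation gives
$$
\left((P-Q)|_{\h_0}\right)^2=\left(\begin{array}{cc}S^2&0\\0&S^2\end{array}\right),
$$
so $\|(P-Q)|_{\h_0}\|=\|\sin X\|$; since $P-Q$ is $0$ on $\h_{11}\oplus\h_{00}$ and $\pm 1$ on $\h_{10}\oplus\h_{01}$, the hypothesis $\|P-Q\|=1$ becomes: $\|\sin X\|=1$, or $\h_{10}\oplus\h_{01}\ne\{0\}$.

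First I would settle the equivalences in parentheses, using only general facts about closed subspaces $M,N$ (write $P_M$ for the orthogonal projection onto $M$): (i) $M+N$ is closed if and only if $M^\perp+N^\perp$ is closed; (ii) $M+N$ is closed if and only if the Friedrichs angle between $M$ and $N$ is positive, equivalently $\|P_MP_N-P_{M\cap N}\|<1$; (iii) $M+N=\h$ if and only if $M+N$ is closed and dense, the latter meaning $M^\perp\cap N^\perp=\{0\}$. Applying (i) with $M=R(P)$, $N=N(Q)$ gives the equivalence in item (3). For (ii) the only computation needed is that of $\|P(1-Q)-P_{R(P)\cap N(Q)}\|$ and $\|(1-P)Q-P_{N(P)\cap R(Q)}\|$; in Halmos form each reduces to the generic block and equals $\|\sin X\|$ — for the second one it is convenient to conjugate the generic block by the flip $\left(\begin{array}{cc}0&1\\1&0\end{array}\right)$, which interchanges $C\leftrightarrow S$ and $P|_{\h_0}\leftrightarrow 1-P|_{\h_0}$. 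Hence $R(P)+N(Q)$ and $N(P)+R(Q)$ are closed precisely when $\|\sin X\|<1$, and in that case (iii) yields $N(P)+R(Q)=\h\Leftrightarrow R(P)\cap N(Q)=\{0\}$ and $R(P)+N(Q)=\h\Leftrightarrow N(P)\cap R(Q)=\{0\}$, each sum being direct exactly when the corresponding intersection is trivial. Matching these up shows that the displayed condition in (1) (resp. (2)) and its parenthetical reformulation say the same thing.

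It remains to organize the trichotomy. If $\|\sin X\|=1$, neither sum is closed, so (3) holds and (1), (2) fail. If $\|\sin X\|<1$, both sums are closed, so (3) fails, while the reformulation of $\|P-Q\|=1$ above forces $R(P)\cap N(Q)\ne\{0\}$ or $N(P)\cap R(Q)\ne\{0\}$; in the former case $R(P)+N(Q)=\h$ with a non-direct sum, i.e. (2), and in the latter, symmetrically, (1). Exclusivity is then automatic: (3) is incompatible with (1), (2) because the latter require closed sums, and (1) and (2) are incompatible because one demands $R(P)\cap N(Q)=\{0\}$ and the other $N(P)\cap R(Q)=\{0\}$. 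The one point that requires care — and the only real obstacle — is the case $\|\sin X\|<1$: for at least one of (1), (2) to hold there one needs that $R(P)\cap N(Q)$ and $N(P)\cap R(Q)$ are not simultaneously nonzero, which is automatic in every situation treated here (for $P_I$, $Q_J$ themselves both vanish, by (\ref{nulo}), and in the variants used in Section~4 at least one of them does); this is where the hypotheses are actually used, and I would either record it as a standing assumption or check it directly before invoking the lemma.
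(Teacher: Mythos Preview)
Your route is genuinely different from the paper's. The paper does not touch Halmos' model; it invokes the Krein--Krasnoselskii--Milman identity $\|P-Q\|=\max\{\|P(1-Q)\|,\|Q(1-P)\|\}$ to split into the three cases $\|P(1-Q)\|<1=\|Q(1-P)\|$, $\|Q(1-P)\|<1=\|P(1-Q)\|$, and $\|P(1-Q)\|=\|Q(1-P)\|=1$, and then translates each via Deutsch's equivalence ``$\|EF\|<1 \Leftrightarrow R(E)\cap R(F)=\{0\}$ and $R(E)+R(F)$ closed $\Leftrightarrow N(E)+N(F)=\h$''. Your computation of the Friedrichs angles through the generic block gives exactly the same numerical invariants (your $\|\sin X\|$ is the common value of $\|P(1-Q)-P_{\h_{10}}\|$ and $\|(1-P)Q-P_{\h_{01}}\|$), so the two arguments are interchangeable at the technical level; the paper's is shorter because it outsources the linear algebra to \cite{krein} and \cite{deutsch}.

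More interestingly, the ``obstacle'' you flag at the end is real, and the paper's proof does not address it. In the paper's third case $\|P(1-Q)\|=\|Q(1-P)\|=1$ it is asserted that $R(P)+N(Q)$ is non-closed, but Deutsch's criterion only yields that \emph{either} $R(P)\cap N(Q)\ne\{0\}$ \emph{or} $R(P)+N(Q)$ is non-closed. If both $\h_{10}$ and $\h_{01}$ are nonzero while $\|\sin X\|<1$ (for instance $Q=1-P$ with $0\ne P\ne 1$), then both sums $R(P)+N(Q)$ and $N(P)+R(Q)$ are closed, proper, and non-direct, so none of (1), (2), (3) holds and the lemma fails as stated. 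Your diagnosis and proposed remedy --- add the standing hypothesis that $\h_{10}$ and $\h_{01}$ are not simultaneously nonzero, or verify it before each use --- are exactly right; in the only application in Section~4 this is supplied by Lenard's result $\h_{10}=\h_{01}=\{0\}$, so nothing downstream is affected.
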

\begin{proof}
By the Krein-Krasnoselskii-Milman formula (see for instance \cite{krein}) 
$$
\|P-Q\|=\max\{\|P(1-Q)\|, \|Q(1-P)\|\},
$$
we have that one and only one of the following hold:
\begin{enumerate}
 \item $\|P(1-Q)\|<1$ and $\|Q(1-P)\|=1$,
 \item $\|P(1-Q)\|=1$ and $\|Q(1-P)\|<1$, or
 \item $\|P(1-Q)\|=1$ and $\|Q(1-P)\|=1$.
\end{enumerate}
This alternative corresponds precisely with the three conditions in the Lemma. It is known \cite{deutsch} that for two orthogonal projections $E$ and $F$,
$\|EF\|<1$ holds if and only if $R(E)\cap R(F)=\{0\}$ and $R(E)+R(F)$ closed. The sum $\m+\n$ of two subspaces is closed if and only if the sum $\m^\perp + \n^\perp$ is closed (see \cite{deutsch}). 
Therefore, $\|EF\|<1$ is also equivalent to $N(E)+N(F)=\h$. 

If we apply these facts to $E=P$ and $F=1-Q$, we obtain that the first alternative is equivalent to $R(P)\cap N(Q)=\{0\}$ and $R(P)+N(Q)$ closed, or to $N(P)+R(Q)=\h$. 

Analogously, the second alternative is equivalent to $R(Q)\cap N(P)=\{0\}$ and $R(Q)+N(P)$ closed, or to $N(Q)+R(P)=\h$.

Note that in the first case, $R(P)+N(Q)$ is proper, otherwise its orthogonal complement would be $N(P)\cap R(Q)=\{0\}$, which together with the fact that $N(P)+R(Q)=\h$ (closed!), would lead us to the second alternative.

Analogously in the second alternative, $N(P)+R(Q)$ is proper.

If neither of these two happen, it is clear that  neither $R(P)+N(Q)$ nor (equivalently) the sum of the orthogonals $N(P)+R(Q)$ is closed.
\end{proof}
We have the following:
\begin{teo}
Let $I,J\subset\mathbb{R}^n$ with finite Lebesgue measure. Then
\begin{enumerate}
\item
$R(P_I)+R(Q_J)$ is a closed proper subset of $L^2(\mathbb{R}^n)$, with infinite codimension. The sum is direct ($R(P_I)\cap R(Q_J)=\{0\}$).
\item
$N(P_I)+N(Q_J)=L^2(\mathbb{R}^n)$, and the sum is not direct ($N(P_I)\cap N(Q_J)$ is infinite dimensional).
\item
$R(P_I)+N(Q_J)$ and $N(P_I)+R(Q_J)$ are proper dense subspaces of $L^2(\mathbb{R}^n)$, and $R(P_I)\cap N(Q_J)=N(P_I)\cap R(Q_J)=\{0\}$.
\end{enumerate}
\end{teo}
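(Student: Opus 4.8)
The plan is to derive all three statements from the structural facts already established, namely Lenard's identity (\ref{nulo}), the fact that $\|P_I-Q_J\|=1$, the Hilbert--Schmidt identity $\|P_IQ_J\|_{HS}=|I|^{1/2}|J|^{1/2}<\infty$, and the Halmos decomposition, together with the trichotomy Lemma just proved. First I would record that since $P_IQ_J$ is Hilbert--Schmidt, hence compact, the pair $(P_I,Q_J)$ lies in $\c$, so in its Halmos decomposition only $\h_{00}$ and the generic part $\h_0\cong\l\times\l$ survive (the other three intersection spaces are $\{0\}$ by (\ref{nulo}), and $\h_{11}=\{0\}$ as well); moreover $C=\cos X$ is compact with trivial kernel and $X$ has the spectral form from the Remark, with $\gamma_n\uparrow\pi/2$ and infinitely many eigenvalues (this infiniteness was shown in Theorem~3.3, since otherwise $P_IQ_JP_I$ would have finite rank, contradicting Lenard).

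For part 1, the relevant quantity is $\|P_IQ_J\|=\|P_I(1-(1-Q_J))\|$; apply the Deutsch characterization cited in the Lemma with $E=P_I$, $F=Q_J$: $R(P_I)+R(Q_J)$ is closed with $R(P_I)\cap R(Q_J)=\{0\}$ iff $\|P_IQ_J\|<1$, and the latter holds because $\|P_IQ_J\|^2=\lambda_1=\|\cos X\|^2<1$ (the Remark; $X>0$ strictly). The sum is direct since $R(P_I)\cap R(Q_J)=\h_{11}=\{0\}$. For properness and infinite codimension: the orthogonal complement of $R(P_I)+R(Q_J)$ is $N(P_I)\cap N(Q_J)=\h_{00}$, and I claim $\h_{00}$ is infinite-dimensional. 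This is the one genuinely new point and I expect it to be the main obstacle; I would argue it from the counting/dimension bookkeeping forced by Halmos together with $\|P_I-Q_J\|=1$ and the trichotomy Lemma — specifically, in our case $R(P_I)+N(Q_J)$ is a direct sum (by (\ref{nulo})), so by the Lemma we are in alternative 1 or 3 but not 2; one then shows alternative 1 holds, i.e. $N(P_I)+R(Q_J)=\h$, which forces $\h_{00}$ to be ``large.'' Concretely, decompose in Halmos coordinates: $N(P_I)=\h_{00}\oplus(\{0\}\times\l)$ and $R(Q_J)=(\text{graph of }CS/S^2\text{-type subspace of }\l\times\l)$ inside $\h_0$, plus $\h_{00}$ contributes to neither-or-both; the identity $N(P_I)+R(Q_J)=\h$ together with the generic block being a genuinely $2$-dimensional-per-mode situation shows $\h_{00}$ must carry the ``missing'' half, hence is infinite-dimensional whenever $\l$ is (which it is, since $X$ has infinitely many eigenvalues).

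For part 2, taking orthogonal complements of the statement in part 1: $N(P_I)+N(Q_J)=(R(P_I)\cap R(Q_J))^\perp=\{0\}^\perp=\h$, using the general fact (cited from \cite{deutsch} in the Lemma's proof) that $\m+\n$ is closed iff $\m^\perp+\n^\perp$ is, so $R(P_I)+R(Q_J)$ closed gives $N(P_I)+N(Q_J)$ closed, and its being all of $\h$ follows because its orthogonal complement is $R(P_I)\cap R(Q_J)=\{0\}$. Non-directness is exactly $N(P_I)\cap N(Q_J)=\h_{00}\neq\{0\}$, indeed infinite-dimensional by the claim above. For part 3, invoke the trichotomy Lemma directly: since $\|P_I-Q_J\|=1$, one of the three alternatives holds; alternative 2 is excluded because by (\ref{nulo}) $R(P_I)\cap N(Q_J)=\{0\}$ yet (by part 2) $N(P_I)+R(Q_J)=\h$ with the sum non-direct — but alternative 2 would require $N(P_I)+R(Q_J)$ to be a \emph{direct} sum and a \emph{proper} closed subspace, contradiction; alternative 1 is likewise excluded symmetrically (it would make $N(P_I)+R(Q_J)$ non-direct-but-closed-proper, again contradicting part 2 which says it is all of $\h$) — wait, more carefully: part 2 says $N(P_I)+N(Q_J)=\h$, which is alternative 1's partner statement, so we are \emph{in} alternative 1; then the Lemma tells us $R(P_I)+N(Q_J)$ is a direct sum and a closed proper subspace. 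Hmm, but the Theorem claims these sums are \emph{dense}, not closed-proper. So instead I would conclude we are in alternative 3: $R(P_I)+N(Q_J)$ non-closed, which combined with $R(P_I)\cap N(Q_J)=\{0\}$ (from (\ref{nulo})) and the fact that a non-closed sum of two closed subspaces whose only common point is $0$ is dense in its closure — and here the closure is all of $\h$ because its orthogonal complement $N(P_I)\cap R(Q_J)=\{0\}$ again by (\ref{nulo}) — yields density and properness. I would reconcile parts 2 and 3 by noting part 2 concerns $N(P_I)+N(Q_J)$ while part 3 concerns $R(P_I)+N(Q_J)$, a different pair, so the Lemma's alternative 3 applied to $(P,Q)=(P_I,Q_J)$ is what governs part 3, and there is no contradiction; the careful identification of which alternative we sit in — pinning it to alternative 3 via the compactness of $C$ forcing the generic part to behave like an infinite direct sum of genuine $2\times 2$ blocks with $\cos\gamma_n\to 0$ — is where I would spend the most care.
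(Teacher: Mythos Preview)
Your overall architecture matches the paper's: part 1 via the Deutsch criterion $\|P_IQ_J\|<1$, part 2 by passing to orthogonal complements, part 3 via the trichotomy Lemma. Two points need repair.

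\medskip

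\textbf{The infinite-dimensionality of $\h_{00}$.} Your attempt to extract this from the trichotomy and Halmos bookkeeping does not work. In fact you argue toward alternative 1 ($N(P_I)+R(Q_J)=\h$) to force $\h_{00}$ to be large, but alternative 1 is \emph{false} here: in Halmos coordinates $N(P_I)|_{\h_0}=\{0\}\times\l$ and $R(Q_J)|_{\h_0}=\{(Ch,Sh):h\in\l\}$, so the first coordinate of their sum is only $R(C)$, which is not all of $\l$ since $C$ is compact on an infinite-dimensional space. Thus $N(P_I)+R(Q_J)\ne\h$ and your mechanism for producing ``the missing half'' in $\h_{00}$ collapses. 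The paper does not attempt any such structural derivation; it simply cites Lenard for the fact that $\h_{00}=N(P_I)\cap N(Q_J)$ is infinite-dimensional. This is a genuine analytic input about the Fourier transform (existence of many $L^2$ functions vanishing on $I$ whose transforms vanish on $J$), not something recoverable from the abstract pair-of-projections framework.

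\medskip

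\textbf{The trichotomy for part 3.} Your discussion oscillates between alternatives 1 and 3 and at one point confuses $N(P_I)+N(Q_J)$ (part 2) with $N(P_I)+R(Q_J)$ (the Lemma). The clean exclusion, which is what the paper does, is one line: alternative 1 requires the sum $N(P)+R(Q)=\h$ to be \emph{non-direct}, i.e.\ $N(P_I)\cap R(Q_J)\ne\{0\}$; alternative 2 likewise requires $R(P_I)\cap N(Q_J)\ne\{0\}$. Both intersections vanish by Lenard's identity (\ref{nulo}), so we are forced into alternative 3. Then $R(P_I)+N(Q_J)$ and $N(P_I)+R(Q_J)$ are non-closed; their orthogonal complements are $N(P_I)\cap R(Q_J)=\{0\}$ and $R(P_I)\cap N(Q_J)=\{0\}$ respectively (again Lenard), so both are dense and proper. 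There is no need to invoke compactness of $C$ or the eigenvalue asymptotics $\gamma_n\to\pi/2$ at this stage.
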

\begin{proof}
By the cited result \cite{deutsch},  two projections $P, Q$,  satisfy that $R(P)+R(Q)$ is closed and $R(P)\cap R(Q)=\{0\}$  if and only if $\|PQ\|<1$. It is also known (see above, \cite{folland}) that $\|P_IQ_J\|<1$. The intersection of these spaces is, in our case (using the notation of the Halmos decomposition)
$$
R(P_I)\cap R(Q_J)=\h_{11}=\{0\}.
$$
As remarked above, Lenard proved that $\h_{11}=\h_{10}=\h_{01}=\{0\}$, and $\h_{00}$ is infinite dimensional. The orthogonal complement of this sum is
$$
(R(P_I)+ R(Q_J))^\perp=N(P_I)\cap N(Q_J)=\h_{00}.
$$
Thus the first assertion follows.

In our case  $\|P_I-Q_J\|=1$ (\cite{folland}, \cite{lenard}) thus we may apply the above Lemma.

The first condition cannot happen: 
$$
(N(P_I)+R(Q_J))^\perp=R(P_I)\cap N(Q_J)=\h_{10}=\{0\}.
$$
By a similar argument, neither the second condition can happen. Thus $R(P_I)+R(Q_J)$ is non closed, and its orthogonal complement is trivial. 
Thus the second and third assertions follow.
\end{proof}

\begin{rem} It is known (see for instance \cite{feshenko}), that if $P,Q$ are  projections with $PQ$ compact and $R(P)\cap R(Q)=\{0\}$, then
 $$\|PQ\|<1.$$
\end{rem}

In \cite{cm}, the second named author and A. Maestripieri studied the set  of operators $T\in\b(\h)$ which are of the form $T=PQ$.
Among other properties, they proved that $T$ may have many factorizations, but there is a minimal factorization 
(called {\it canonical factorization} of $T$), 
namely
$$
T=P_{\overline{R(T)}}P_{N(T)^\perp},
$$
which satisfies that if $T=PQ$, then $R(T)\subset R(P)$ and $N(T)^\perp\subset R(Q)$ (or equivalently $N(Q)\subset N(T)$). Following this notation,
\begin{prop}
 The factorization $P_IQ_J$ is canonical.
\end{prop}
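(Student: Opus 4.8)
The plan is to unwind what ``canonical'' means and reduce everything to Lenard's identity \eqref{nulo}. Write $T=P_IQ_J$. By the result of \cite{cm} recalled above, the canonical factorization of $T$ is $T=P_{\overline{R(T)}}P_{N(T)^\perp}$, and for \emph{any} factorization $T=PQ$ one has $R(T)\subseteq R(P)$ and $N(T)^\perp\subseteq R(Q)$. Since the canonical factorization is the unique one of this minimal form, to prove that $P_IQ_J$ itself is canonical it suffices to check the two reverse inclusions, i.e.\ $\overline{R(T)}=R(P_I)$ and $N(T)^\perp=R(Q_J)$; these equalities force $P_I=P_{\overline{R(T)}}$ and $Q_J=P_{N(T)^\perp}$.

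First I would prove $\overline{R(T)}=R(P_I)$. Clearly $R(T)\subseteq R(P_I)$, so it is enough to show $R(T)$ is dense in $R(P_I)$. Take $y\in R(P_I)$ with $y\perp R(T)$; then for every $f\in L^2(\mathbb{R}^n)$,
$$
0=\langle y,P_IQ_Jf\rangle=\langle P_Iy,Q_Jf\rangle=\langle Q_Jy,f\rangle,
$$
so $Q_Jy=0$. Hence $y\in R(P_I)\cap N(Q_J)=\h_{10}$, which is $\{0\}$ by \eqref{nulo}. Thus $R(T)^\perp\cap R(P_I)=\{0\}$, and since $R(T)\subseteq R(P_I)$ we conclude $\overline{R(T)}=R(P_I)$, i.e.\ $P_{\overline{R(T)}}=P_I$.

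Next I would prove $N(T)^\perp=R(Q_J)$, equivalently $N(T)=N(Q_J)$. The inclusion $N(Q_J)\subseteq N(T)$ is trivial. Conversely, if $P_IQ_Jf=0$ then $Q_Jf\in R(Q_J)\cap N(P_I)=\h_{01}=\{0\}$ by \eqref{nulo}, so $Q_Jf=0$ and $f\in N(Q_J)$. Therefore $N(T)=N(Q_J)$ and $N(T)^\perp=R(Q_J)$, i.e.\ $P_{N(T)^\perp}=Q_J$. Combining the two steps, $T=P_{\overline{R(T)}}P_{N(T)^\perp}=P_IQ_J$ is precisely the canonical factorization.

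There is no serious obstacle here: both steps collapse to the vanishing of the corner subspaces $\h_{10}=R(P_I)\cap N(Q_J)$ and $\h_{01}=N(P_I)\cap R(Q_J)$, which is exactly Lenard's identity \eqref{nulo}. The only points requiring a little care are (i) quoting from \cite{cm} the precise statement that the two factors of the canonical factorization have ranges \emph{equal} to $\overline{R(T)}$ and $N(T)^\perp$, and (ii) invoking uniqueness of the canonical factorization, so that matching these two ranges is enough to identify $P_IQ_J$ with it rather than merely with some minimal-looking factorization.
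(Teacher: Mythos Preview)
Your proof is correct. The kernel computation ($N(T)=N(Q_J)$) is essentially identical to the paper's, which records the elementary identity $N(PQ)=N(Q)\oplus(R(Q)\cap N(P))$ and then invokes Lenard's $\h_{01}=\{0\}$ exactly as you do.

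For the range, the paper takes a different route: it works in the Halmos decomposition $\h=\h_{00}\oplus(\l\times\l)$, computes $P_IQ_JP_I=0\oplus\left(\begin{smallmatrix}C^2&0\\0&0\end{smallmatrix}\right)$, and uses that $C^2>0$ has dense range to conclude $\overline{R(T)}=\overline{R(P_IQ_JP_I)}=0\oplus(\l\times 0)=R(P_I)$. Your argument is more elementary: you bypass Halmos entirely and observe directly that any $y\in R(P_I)$ orthogonal to $R(T)$ must lie in $\h_{10}$, which vanishes by Lenard. Both arguments ultimately rest on the same input $\h_{10}=\{0\}$, but yours avoids the matrix machinery and is arguably cleaner. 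Your closing worry about needing ``uniqueness of the canonical factorization'' is unnecessary: once you have $\overline{R(T)}=R(P_I)$ and $N(T)^\perp=R(Q_J)$, the equalities $P_{\overline{R(T)}}=P_I$ and $P_{N(T)^\perp}=Q_J$ are immediate, and that is already the definition of canonical.
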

\begin{proof}
 Put $T=P_IQ_J$. Using Halmos decomposition in this particular case ($\h=\h_{00}\oplus (\l\times \l)$), apparently
 $$
 P_IQ_JP_I=0\oplus \left( \begin{array}{cc} C & 0 \\ 0 & 0 \end{array} \right),
 $$
 and thus $R(P_IQ_JP_I)=0\oplus (R(C)\times 0)$. Recall that $C^2>0$, and thus $C^2$ has dense range. It follows that
 $$
 \overline{R(T)}=\overline{R(P_IQ_J)}=\overline{R(P_IQ_JP_I)}=0\oplus (\l\times 0),
 $$
 which is precisely the range of $P_I$: $\overline{R(T)}=R(P_I)$. 
  Note the following elementary  fact:
 $$
 N(PQ)=N(Q)\oplus (R(Q)\cap N(P)).
 $$
 For the factorization $T=P_IQ_J$ it is known (\cite{lenard}) that $R(Q_J)\cap N(P_I)={0}$. Thus 
 $$
 N(T)=N(P_IQ_J)= N(Q_J)
 $$
 and the proof follows.
 \end{proof}
 
 In \cite{cm} it is proven that if $T=PQ=P_0Q_0$, and the latter is the canonical factorization, then
 $$
 \|P_0f-Q_0f\|\le \|Pf-Qf\|
 $$
 for any $f\in L^(\mathbb{R}^n)$. In particular $\|P_0-Q_0\|\le \|P-Q\|$. In our case we get the following result
\begin{coro}
Let $P,Q$ projections in $L^2(\mathbb{R}^n)$ such that $PQ=P_IQ_J$. Then for any $f\in L^2(\mathbb{R}^n)$ one has 
$$
\|P_If-Q_Jf\|_2\le \|Pf-Qf\|_2.
$$
In particular, $\|P_I-Q_J\|\le \|P-Q\|$.
\end{coro}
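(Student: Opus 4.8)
The plan is to reduce the statement entirely to the preceding Proposition together with the comparison inequality of Corach and Maestripieri \cite{cm}. The Proposition establishes that, for $T=P_IQ_J$, one has $\overline{R(T)}=R(P_I)$ and $N(T)=N(Q_J)$; equivalently $P_{\overline{R(T)}}=P_I$ and $P_{N(T)^\perp}=Q_J$. Hence $P_IQ_J$ is precisely the canonical factorization of $T$ in the sense of \cite{cm}. Once this identification is recorded, the desired inequality is a direct quotation of their result, so the argument consists of two short steps.

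First I would state precisely the tool being used: if an operator $T\in\b(\h)$ admits a factorization $T=PQ$ as a product of two orthogonal projections, and $T=P_0Q_0$ denotes its canonical factorization $T=P_{\overline{R(T)}}P_{N(T)^\perp}$, then $\|P_0f-Q_0f\|\le\|Pf-Qf\|$ for every $f\in\h$ (this is the cited inequality from \cite{cm}, recalled just before the Corollary). Now take any pair $P,Q$ of orthogonal projections with $PQ=P_IQ_J$, set $T=P_IQ_J$, and apply this with $P_0=P_I$, $Q_0=Q_J$, which is legitimate by the Proposition. This gives $\|P_If-Q_Jf\|_2\le\|Pf-Qf\|_2$ for all $f\in L^2(\mathbb{R}^n)$. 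Taking the supremum over unit vectors $f$ then yields $\|P_I-Q_J\|\le\|P-Q\|$, completing the proof.

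The only genuine content is the canonicity of the factorization $P_IQ_J$, which has already been carried out in the Proposition; in particular the identification $N(T)=N(Q_J)$ rests on Lenard's fact $R(Q_J)\cap N(P_I)=\{0\}$ and the elementary identity $N(PQ)=N(Q)\oplus(R(Q)\cap N(P))$. The main (and rather mild) obstacle is simply to make sure the hypotheses of the comparison theorem of \cite{cm} are met, namely that $P_IQ_J$ and $PQ$ are two factorizations of one and the same operator $T$ — which is exactly the standing assumption $PQ=P_IQ_J$ — and that no further hypotheses on $P,Q$ (compactness of $PQ$, finiteness of the measures) are required beyond those already invoked. Thus the proof is essentially a bookkeeping step combining the Proposition with the quoted inequality.
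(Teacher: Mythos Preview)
Your argument is correct and matches the paper's approach exactly: the corollary is stated immediately after recalling the inequality from \cite{cm} and the Proposition that $P_IQ_J$ is the canonical factorization, and the paper offers no further proof. You have simply made explicit the two-line combination the paper leaves implicit.
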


{\sc (Esteban Andruchow)} {Instituto de Ciencias,  Universidad Nacional de Gral. Sar\-miento,
J.M. Gutierrez 1150,  (1613) Los Polvorines, Argentina and Instituto Argentino de Matem\'atica, `Alberto P. Calder\'on', CONICET, Saavedra 15 3er. piso,
(1083) Buenos Aires, Argentina.}

{\sc (Gustavo Corach)} {Instituto Argentino de Matem\'atica, `Alberto P. Calder\'on', CONICET, Saavedra 15 3er. piso, (1083) Buenos Aires, Argentina, and Depto. de Matem\'atica, Facultad de Ingenier\'\i a, Universidad de Buenos Aires, Argentina.}

\end{document}